\documentclass[12pt]{amsart}

\usepackage{amssymb,latexsym, amssymb, amsthm}
\usepackage{enumerate}
\usepackage{amsfonts}
\usepackage{color}
\usepackage{comment}
\usepackage{graphicx}

\usepackage{hyperref}

\newtheorem{theorem}{Theorem}[section]
\newtheorem{lemma}[theorem]{Lemma}

\newcommand{\N}{\mathbb{N}}
\newcommand{\R}{\mathbb{R}}
\newcommand{\Z}{\mathbb{Z}}

\begin{document}

\title[$k$-free numbers]{Moment estimates for exponential sums\\ over $k$-free numbers}

\author[Eugen Keil]{Eugen Keil}

\address{Department of Mathematics \\ Bristol University\\
University Walk, Clifton, Bristol BS8 1TW, United Kingdom}

\email{maxek@bristol.ac.uk}

\date{\today}

\subjclass[2010]{Primary 11L07; Secondary 11N36}
\keywords{squarefree numbers, $k$-free numbers \and exponential sum}

\maketitle

\begin{abstract}
We investigate the size of $L^p$-integrals for exponential sums over $k$-free numbers
and prove essentially tight bounds. 
\end{abstract}

\section{Introduction} \label{Intro}

Squarefree numbers are known to have a structure with complexity somewhere 
between periodic functions and the primes. 
We investigate $L^p$-integrals of exponential sums of $k$-free numbers
and show how their behaviour reflects this intuition in a quantitative way.
Define the indicator function of $k$-free numbers $\mu_k$ for $k \geq 2$ by
\begin{align*}
\mu_k(n) = \left\{\begin{array}{cc} 1 &  \mbox{if there is no } d > 1 
\mbox{ such that } d^k|n, \\ 0 & \mbox{otherwise.}  \end{array} \right.
\end{align*}
For the parameter $N \in \N$ the corresponding exponential sum is given by
\begin{align*}
S_k(\alpha) = \sum_{n \leq N} \mu_k(n) e(\alpha n),
\end{align*}
where $e(x) = \exp(2\pi i x)$ as usual.
Br\"udern, Granville, Perelli, Vaughan and Wooley \cite{BGPVW} 
gave (among other interesting results) the first non-trivial bounds on the $L^1$-mean for this exponential sum.
Balog and Ruzsa \cite{BR} improved upon these estimates and gained essentially best possible estimates for
the $L^1$-mean in their attempt to understand lower bounds for $L^1$-means of the M\"obius function.
We state their result which is useful for us later on.

\begin{theorem}\label{BR-Thm1}
We have
\begin{align*}
N^{1/(k+1)} \ll \int_0^1 |S_k(\alpha)| \, d\alpha \ll N^{1/(k+1)}.
\end{align*}
\end{theorem}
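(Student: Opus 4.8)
The plan is to establish the two inequalities separately, both organised around the Dirichlet-convolution identity
\[
\mu_k(n)=\sum_{d^k\mid n}\mu(d),
\]
where $\mu$ is the M\"obius function, which splits the exponential sum as
\[
S_k(\alpha)=\sum_{d\le N^{1/k}}\mu(d)\,v_d(\alpha),\qquad v_d(\alpha):=\sum_{m\le N/d^k}e(d^km\alpha).
\]
Each $v_d$ is a dilated linear exponential sum, so $|v_d(\alpha)|\ll\min\bigl(N/d^k,\|d^k\alpha\|^{-1}\bigr)$, $\int_0^1|v_d|\ll\log N$, and $\int_0^1 v_{d_1}(\alpha)\overline{v_{d_2}(\alpha)}\,d\alpha=\lfloor N/[d_1,d_2]^k\rfloor$. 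The controlling scale is $D_0:=N^{1/(k+1)}$, at which $N/d^k\asymp d$; in particular for $d\sim D_0$ the sum $v_d$ has about $D_0$ terms. I would treat the dyadic blocks $\sum_{d\sim D}\mu(d)v_d$ one at a time and add the contributions.

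For the \emph{upper bound}, take first $D\ge D_0$. By Cauchy--Schwarz and the inner-product formula above,
\[
\int_0^1\Bigl|\sum_{d\sim D}\mu(d)v_d(\alpha)\Bigr|^2 d\alpha=\sum_{d_1,d_2\sim D}\mu(d_1)\mu(d_2)\lfloor N/[d_1,d_2]^k\rfloor\ll N D^{1-k},
\]
the off-diagonal being handled by writing $d_i=ea_i$ with $e=(d_1,d_2)$, so that $[d_1,d_2]=ea_1a_2$; hence $\int_0^1|\sum_{d\sim D}\mu(d)v_d|\ll N^{1/2}D^{(1-k)/2}$, and summing this geometric series over dyadic $D\ge D_0$ yields $\ll N^{1/(k+1)}$. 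For $D\le D_0$ the triangle inequality only gives $\ll D\log N$, which loses a logarithm after summation over the $\asymp\log N$ dyadic scales, so one must do better. Here I would split $[0,1]$ into arcs about rationals of bounded height together with their complement: on the complement the $v_d(\alpha)$ are $\ll1$ with essentially uncorrelated arguments, so a second-moment estimate restricted there gives $\ll D^{1/2+o(1)}$, whereas on an arc about $a/q$ only the few $d$ with $q\mid d^k$ matter --- with $d$ equal to the squarefree kernel of $q$ dominating --- and these can be estimated crudely. Summing a bound of the shape $D^{1/2+o(1)}+N^{o(1)}$ over the dyadic $D\le D_0$ is still $\ll D_0=N^{1/(k+1)}$, and combining the two ranges gives the upper bound.

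For the \emph{lower bound}, note first that the naive major-arc contribution is itself too small: near a squarefree $q$ one has $S_k(a/q+\beta)\approx h(a/q)\,v(\beta)$ with $v(\beta)=\sum_{n\le N}e(\beta n)$ and $|h(a/q)|\asymp q^{-k}$, which contributes only $\asymp\sum_q q^{1-k}=O(\log N)$ altogether. The mass has to come from the critical scale instead. Rewriting the block there via the identity above,
\[
\sum_{d\sim D_0}\mu(d)v_d(\alpha)=\sum_{n\in\mathcal N}c_n e(n\alpha),
\]
where $\mathcal N$ is the set of $n\le N$ divisible by $d^k$ for some $d\sim D_0$, with $|\mathcal N|\asymp ND_0^{1-k}=N^{2/(k+1)}$ and $c_n=\sum_{d\sim D_0,\ d^k\mid n}\mu(d)$ which is $\mu$-valued and for all but a few $n$ equals $\pm1$. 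Thus this block has $L^2$-norm $\asymp|\mathcal N|^{1/2}\asymp N^{1/(k+1)}$, and --- unlike for $S_k$ itself --- its $L^2$-mass does not concentrate near low-height rationals. I would bound the block in $L^\infty$ off a thin union of major arcs (using $\sum_{d\sim D_0}\|d^k\alpha\|^{-1}\ll D_0^{1+o(1)}$ there) and convert its $L^2$ lower bound into an $L^1$ lower bound through $\|f\|_1\ge\|f\|_2^2/\|f\|_\infty$; finally one must check, by estimates of the same flavour as in the upper bound, that testing $S_k$ against the unimodular phase of this block is not spoiled by the remaining scales $d\not\sim D_0$.

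The crux, in both directions, is to eliminate the logarithmic and $N^{o(1)}$ losses and so land exactly on the exponent $1/(k+1)$: on the upper-bound side this is the range $d\le D_0$, where the logarithm in $\int_0^1|v_d|$ must be absorbed by square-root cancellation over $d$ and by the sparsity of the arcs; on the lower-bound side it is the minor-arc $L^\infty$ estimate for the critical block, together with the verification that the off-scale part is genuinely negligible --- it is precisely these two points that require the careful treatment of Balog and Ruzsa rather than a soft second-moment argument.
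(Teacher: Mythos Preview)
The paper does not prove Theorem~\ref{BR-Thm1}; it is quoted from Balog--Ruzsa \cite{BR}. What the paper \emph{does} contain is a reproduction of the Balog--Ruzsa upper-bound machinery in Section~\ref{Psmall} (written for $1<p<1+1/k$, but specialising to $p=1$ immediately). So the relevant comparison is with that argument.

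Your treatment of the large scales $D\ge D_0$ via Cauchy--Schwarz and the second moment is correct and matches both the paper and \cite{BR}: this is exactly the $T^*$-part handled by Lemma~\ref{L1L2-est}. The gap is in the small scales $D\le D_0$. Your claim that ``on the complement the $v_d(\alpha)$ are $\ll 1$'' is simply false: $|v_d(\alpha)|\ll\|d^k\alpha\|^{-1}$, and for any fixed $\alpha$ the fractional parts $\{d^k\alpha\}$ with $d\sim D$ will land near $0$ for a positive proportion of $d$. More to the point, the $L^1$-mass of a single $v_d$ is $\asymp\log(N/d^k)$ and is spread over \emph{all} $d^k$ spikes $a/d^k$, not over rationals of bounded height; no major/minor dissection with $O(1)$ many arcs captures it, and letting the number of arcs grow recreates the logarithm you are trying to kill. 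The device that actually works, and that the paper reproduces in Section~\ref{Psmall}, is Fej\'er-kernel smoothing: one replaces the sharp cutoff $1_{n\le N}$ by a trapezoidal weight with slope of length $K=2^{h+ki-1}$, so that the kernel for each $d$ decays like $d^k K^{-1}\|d^k\alpha\|^{-2}$ rather than $\|d^k\alpha\|^{-1}$. This makes the $L^1$-norm of the smoothed $d$-block genuinely $\ll 2^i(1+k(h-i))$, which \emph{is} summable to $2^h\asymp D_0$; the smoothing error lives on two short intervals of length $K$ and is disposed of in $L^2$ via Lemma~\ref{BR-Lemma1}. This is the idea your sketch is missing.

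For the lower bound your outline is in the right spirit, but as you yourself flag, it only reaches $N^{1/(k+1)-o(1)}$: the minor-arc $L^\infty$ bound $\sum_{d\sim D_0}\|d^k\alpha\|^{-1}\ll D_0^{1+o(1)}$ carries an $N^{o(1)}$ loss through $\|f\|_1\ge\|f\|_2^2/\|f\|_\infty$, and the cross-terms $\int T_i\,\overline{\mathrm{sgn}\,T_{i_0}}$ for $i$ adjacent to $i_0$ are of the same order as the main term, so ``not spoiled by the remaining scales'' is not automatic. The Balog--Ruzsa argument again uses the Fej\'er-smoothed blocks as the test object, which is what allows the sharp constant-free exponent; your proposal identifies the right scale $D_0$ but not the mechanism that removes the $\epsilon$.
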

The Vinogradov notation $f \ll g$ is used to express that $|f(x)| \leq C g(x)$ for some constant $C>0$. 
Equivalently, in $O$-notation, we write $f = O(g)$.

While the $L^1$-case is interesting in itself, it is possible to gain a deeper understanding
by inspecting the $L^p$-norms for various $p \geq 0$, as we will see shortly.
We combine interpolation techniques with some methods from \cite{BR} to obtain
an almost complete understanding of the order of magnitude for the $L^p$-means.
Define the $p$-th moment integral for $p \geq 0$ by
\begin{align*}
I_k(p) := \int_0^1 |S_k(\alpha)|^p \,d\alpha.
\end{align*}

The main results of this paper are the following estimates.

\begin{theorem} \label{main-Thm}
For $k \geq 2$ and $N \geq 2$, we have
\begin{align*}
N^{p-1} \ll\ & I_k(p) \ll N^{p-1}  &\mbox{\quad for \quad} p > 1+1/k,\\
 N^{1/k}\log N \ll\ & I_k(p) \ll N^{1/k}(\log N)^2 &\mbox{\quad for \quad} p = 1+1/k,\\
N^{p/(k+1)} \ll\ & I_k(p) \ll N^{p/(k+1)} &\mbox{\quad for \quad} p < 1+1/k,
\end{align*}
where the implied constants may depend on $k$ and $p$.
\end{theorem}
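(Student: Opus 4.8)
\medskip

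\noindent\textbf{Reductions.}
Write $p_{0}=1+1/k$, the common value of the three regimes being $N^{p_{0}-1}=N^{1/k}$. By Parseval,
\[
I_{k}(2)=\sum_{n\le N}\mu_{k}(n)^{2}=\sum_{n\le N}\mu_{k}(n)=\zeta(k)^{-1}N+O(N^{1/k})\asymp N,
\]
and $\|S_{k}\|_{\infty}\le N$, so for $p\ge 2$ the upper bound is immediate from $I_{k}(p)\le N^{p-2}I_{k}(2)$. Since $S_{k}(0)\gg N$ and $|S_{k}(\alpha)-S_{k}(0)|\le 2\pi|\alpha|N^{2}$, one has $|S_{k}(\alpha)|\gg N$ on $|\alpha|\le c/N$, hence $I_{k}(p)\gg N^{p-1}$ for every $p>0$; this is the full lower bound when $p\ge p_{0}$. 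Finally Hölder on $[0,1]$ gives $I_{k}(1)\le I_{k}(p)^{1/p}$ for $p\ge 1$ and $I_{k}(p)\le I_{k}(1)^{p}$ for $0<p\le 1$, so Theorem~\ref{BR-Thm1} already yields the lower bound $I_{k}(p)\gg N^{p/(k+1)}$ for all $p\ge 1$ and the upper bound $I_{k}(p)\ll N^{p/(k+1)}$ for all $0<p\le 1$.

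\medskip

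\noindent\textbf{The level-set estimate.}
What remains is the upper bounds in the range $1<p<2$ (the endpoint $p=p_{0}$ included) and the lower bounds in $0<p<1$. I would obtain both from the two-sided estimate
\[
\big|\,\{\alpha\in[0,1):|S_{k}(\alpha)|>T\}\,\big|\ \asymp\ N^{1/k}\,T^{-(k+1)/k}\qquad\big(N^{1/(k+1)}\le T\le N\big),
\]
valid up to powers of $\log N$. Granting it, write $I_{k}(p)=\int_{0}^{\infty}pT^{p-1}\big|\{|S_{k}|>T\}\big|\,dT$ and split at $T=N^{1/(k+1)}$: below the split the trivial bound $\big|\{|S_{k}|>T\}\big|\le 1$ contributes $\asymp N^{p/(k+1)}$, while above it the displayed estimate contributes $\asymp N^{p/(k+1)}$ for $p<p_{0}$, $\asymp N^{1/k}\log N$ for $p=p_{0}$, and $\asymp N^{p-1}$ for $p>p_{0}$. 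This reproduces exactly the three asserted regimes (the admissible loss of a further logarithm at $p_{0}$ accounts for the gap between $N^{1/k}\log N$ and $N^{1/k}(\log N)^{2}$), and the lower bound for $0<p<1$ falls out of the matching lower estimate in the same way; its case $p=1$ is Theorem~\ref{BR-Thm1}.

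\medskip

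\noindent\textbf{Proof of the level-set estimate.}
Using $\mu_{k}(n)=\sum_{d^{k}\mid n}\mu(d)$ one writes $S_{k}(\alpha)=\sum_{d\le N^{1/k}}\mu(d)\,T_{N/d^{k}}(d^{k}\alpha)$ with $T_{M}(\theta)=\sum_{m\le M}e(m\theta)$, and separates $d\le D$ from $d>D$ at $D\asymp N^{1/(k+1)}$. For the tail, the orthogonality relation $\int_{0}^{1}T_{N/d^{k}}(d^{k}\alpha)\,\overline{T_{N/e^{k}}(e^{k}\alpha)}\,d\alpha=\lfloor N/\mathrm{lcm}(d,e)^{k}\rfloor$ together with $\sum_{m>D}d(m)^{2}m^{-k}\ll D^{1-k}(\log D)^{3}$ gives $\int_{0}^{1}\big|\sum_{d>D}\mu(d)T_{N/d^{k}}(d^{k}\alpha)\big|^{2}d\alpha\ll ND^{1-k}(\log N)^{3}=N^{2/(k+1)}(\log N)^{3}$; thus the tail has $L^{2}$-``noise level'' $\asymp N^{1/(k+1)}$ and, by Chebyshev, exceeds $T$ only on a set of measure $\ll N^{2/(k+1)}(\log N)^{3}T^{-2}$, admissible for $T\ge N^{1/(k+1)}$. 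For the head $\sum_{d\le D}\mu(d)T_{N/d^{k}}(d^{k}\alpha)$, a Farey dissection at level $Q\asymp N^{1/2}$ shows that on the arc $\alpha=a/q+\beta$ the only summands of substantial size are those with $r_{q}\mid d$, where $r_{q}$ is least with $q\mid r_{q}^{k}$; isolating them one gets a bump $c_{k}(q)T_{N}(\beta)$, with $|c_{k}(q)|\asymp \mathrm{rad}(q)^{-k}$ when $q$ is $(k+1)$-free (so $r_{q}=\mathrm{rad}(q)$) and $c_{k}(q)=0$ otherwise. Each such bump has a plateau of height $\asymp N/\mathrm{rad}(q)^{k}$ on $|\beta|\ll 1/N$ and a decay-tail of size $\asymp(\mathrm{rad}(q)^{k}|\beta|)^{-1}$, so it exceeds $T$ only on a set of measure $\asymp(\mathrm{rad}(q)^{k}T)^{-1}$; summing $\phi(q)\cdot(\mathrm{rad}(q)^{k}T)^{-1}$ over the $(k+1)$-free $q\le D$ with $\mathrm{rad}(q)\le(N/T)^{1/k}$ produces the stated $N^{1/k}T^{-(k+1)/k}$.

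\medskip

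\noindent\textbf{Main difficulty.}
The hard part is the \emph{upper} half of the level-set estimate: showing that $|S_{k}|$ is never much larger than the bump picture predicts. This is what forces the precise balance $D\asymp N^{1/(k+1)}$ between the size of the resolved major-arc bumps ($d\le D$) and the $L^{2}$ noise of the unresolved tail ($d>D$), so that the bumps one has not peeled off sit below the noise level $(ND^{1-k})^{1/2}\asymp N^{1/(k+1)}$; and it requires handling not only the perfect $k$-th-power denominators $q=r^{k}$ but the whole family of $(k+1)$-free $q$ with a small radical, since all of these feed into $\{|S_{k}|>T\}$ on the same scale. The remaining delicacy is logarithmic bookkeeping: the argument must be run so as to lose at most one extra $\log N$ at $p_{0}$ and none for $p\neq p_{0}$ — which is precisely why the three regimes cannot be obtained merely by interpolating through the endpoint, where the true order of magnitude exceeds $N^{1/k}$ by a logarithm.
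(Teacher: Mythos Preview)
Your level-set strategy is a legitimate alternative to the paper's approach, and your reductions are largely sound (one slip: at $p=p_0$ the asserted lower bound is $N^{1/k}\log N$, not merely $N^{p_0-1}=N^{1/k}$, so this case does \emph{not} follow from the peak at $\alpha=0$ and belongs among ``what remains''). The genuine gap is in the \emph{upper} half of your level-set estimate. Your Farey sketch locates the bumps of the head $\sum_{d\le D}\mu(d)T_{N/d^k}(d^k\alpha)$, but it never shows the head is small \emph{away} from them: on the arc at $a/q$ the terms with $r_q\nmid d$ are each $O(q)$, there are up to $D$ of them, and you have given no mechanism to prevent their sum from being large. Controlling that remainder is precisely a pointwise minor-arc bound for (a truncation of) $S_k$, and you have not supplied one. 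Worse, even a level-set upper bound with a stray factor $(\log N)^c$ feeds through the layer-cake integral to give $I_k(p)\ll N^{p-1}(\log N)^c$ for $p>p_0$ and $I_k(p)\ll N^{p/(k+1)}(\log N)^c$ for $p<p_0$; the theorem's log-free bounds force $c=0$, which is strictly stronger than the ``up to powers of $\log N$'' you are willing to grant yourself.

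The paper sidesteps this entirely by never seeking pointwise control. It splits $\mu_k=\sum_{i\le H}b_i$ dyadically and proves the \emph{integral} bounds $\int_0^1|T_i|\,d\alpha\ll 2^i\log N$ and $\int_0^1|T_i|^2\,d\alpha\ll N2^{-i(k-1)}$ (Lemma~\ref{L1L2-est}); the bump structure you try to resolve pointwise is already encoded in the $L^1$ bound, which is elementary because $T_i$ is a short sum of geometric series. H\"older-interpolation on each piece then gives the critical case $p=p_0$ with two logarithms. For $p\neq p_0$ the paper removes the logs by two devices you do not have: a weighted H\"older (insert $i^{-1}\cdot i$ before summing the pieces) replaces the outer log by a convergent series, and for $p<p_0$ a Fej\'er-type smoothing sharpens the $L^1$ bound on $T_i$ to $2^i(1+k(h-i))$, whose polynomial factor is harmless after interpolation. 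Your lower level-set bound, incidentally, is essentially the paper's Lemma~\ref{Major}, which shows $|S_k(a/r^k+\beta)|\gg N/r^k$ directly and, summed over squarefree $r\le N^{1/(5k)}$, yields the missing lower bound $I_k(p_0)\gg N^{1/k}\log N$.
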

 
It is possible to get a more intuitive feeling for this statement by
plotting $E(p) = \inf\{r \in \R: I_k(p) \ll N^r\}$.
For $k = 2$ it gives the following picture.

\begin{center}
\includegraphics[width=50mm]{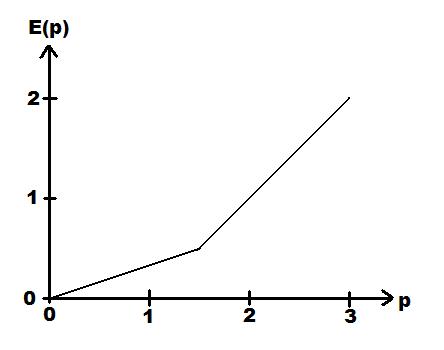}
\end{center}

One interpretation of this result is to see the critical point $p = 1 + 1/k$, 
where the asymptotic growth behaviour changes, as a measure of regularity.
The corresponding graph for periodic functions, for example, has its critical point 
at $p = 1$. 
From the work of Vaughan \cite{Vau} on the $L^1$-norm
for the exponential sum of the von Mangoldt-function, we obtain
the critical point at $p = 2$, as can be seen by the methods of Section \ref{LB}.
The intermediate behaviour of $k$-free numbers in this aspect wasn't observed in the
literature so far and adds a quantitative aspect to
the intuitive feeling of regularity of $k$-free numbers.

On the practical side, the theorem provides the sharp bound $O(N^{p-1})$ for values of $p$ less than two. 
This saves many technical steps in applications of the circle method to additive problems involving $k$-free numbers.
The investigation of additive problems involving $k$-free numbers was started in a series 
of papers of Evelyn and Linfoot \cite{EL}.
Their results were refined by Mirksy \cite{M} and recent work on this topic can be found in \cite{BP}, for example.
As another immediate application we mention that the pointwise bounds for 
$S_k(\alpha)$ in \cite{SP} and \cite{T}
can be combined with our $L^p$ results to improve on the minor-arc estimate
in Theorem 1.3 of \cite{BGPVW}.

The author believes that the lower bound for $p = 1 + 1/k$ corresponds to the true order of
magnitude and it would be nice to have a tight upper estimate as well.

The methods of this paper extend to a wider range of functions coming from a convergent sieve process.
If $S_f(\alpha) = \sum_{n \leq N} f(n) e(\alpha n)$ is the exponential sum for the function $f$,
it is natural to ask for conditions under which one can obtain bounds of the form 
$\int_0^1 |S_f(\alpha)|^p \, d\alpha \ll N^{p-1}$ for some $p < 2$.
Estimates of this type are of interest for applications of the circle method.
Similar behaviour should be observable in the multidimensional setting as well. The generalized $\gcd$-functions 
in Section 6 of \cite{EK} might be a natural example to look at.

\section{Preparation and Lemmata} \label{Prep}

We treat $k$ as constant throughout the paper and for notational simplicity often don't indicate 
the dependence of functions on $k$. 
All constants hidden in the Vinogradov and $O$-notation may also depend on $k$.
Write $[x]$ for $\max \{z \in \Z: z \leq x\}$, the integer part of $x \in \R$,
and $\|x\|$ for $\min \{|z-x|: z \in \Z\}$, the distance of $x$ to the nearest integer.
Denote the greatest common divisor of $a$ and $b$ by $(a;b)$ and write lcm$(a,b)$
for the least common multiple.  
The function $\mu_k$ has a well-known decomposition of the form
\begin{align} \label{mu-identity}
\mu_k(n) = \sum_{d^k|n} \mu(d).
\end{align}
Based on this decomposition, we define the functions $b_{y,z}(n)$ by 
putting $b_{y,z}(0) = 0$, and when $n \neq 0$ by taking 
\begin{align} \label{def-byz}
b_{y,z}(n) = \sum_{\substack{d^k|n\\y \leq d < z}} \mu(d).
\end{align}
We give a simple but crucial lemma from \cite{BR} rewritten in our notation.

\begin{lemma} \label{BR-Lemma1}
For any $1 \leq K \leq N$ and $1 \leq y < z$ we have
\begin{align*}
\sum_{N-K < n \leq N} |b_{y,z}(n)|^2 \ll Ky^{1-k} + N^{1/k}\log^3(z).
\end{align*}
The second summand on the right hand side is not necessary if $K = N$.
\end{lemma}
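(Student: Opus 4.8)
The plan is to expand the square and reduce the problem to counting integers in the short interval $(N-K, N]$ that are divisible by $\mathrm{lcm}(d_1,d_2)^k$ for pairs $d_1,d_2 \in [y,z)$. First I would write
\[
\sum_{N-K < n \leq N} |b_{y,z}(n)|^2 = \sum_{\substack{y \leq d_1, d_2 < z}} \mu(d_1)\mu(d_2) \sum_{\substack{N-K < n \leq N\\ d_1^k \mid n,\ d_2^k \mid n}} 1,
\]
and note that $d_1^k \mid n$ and $d_2^k \mid n$ together say exactly that $\ell^k \mid n$ where $\ell = \mathrm{lcm}(d_1,d_2)$. The inner count is then $\ll K/\ell^k + 1$, and trivially it is also $\ll K/\ell^k$ when $K = N$ (since there are at most $N/\ell^k$ such $n$ in all of $[1,N]$), which is what will let us drop the second term in that case.

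Next I would split the resulting bound into the contribution of the main term $K/\ell^k$ and the contribution of the error term $1$. For the main term, bound $|\mu(d_1)\mu(d_2)| \leq 1$ and use $\ell = \mathrm{lcm}(d_1,d_2) \geq \max(d_1,d_2) \geq y$; more carefully, writing $g = (d_1;d_2)$ and $d_i = g e_i$ with $(e_1;e_2)=1$ gives $\ell = g e_1 e_2$, so $\ell^k \geq g^k e_1^k e_2^k$. Summing $K \sum_{d_1,d_2 \geq y} 1/\ell^k$ by fixing $g$ and summing over coprime $e_1, e_2 \geq y/g$ produces a convergent sum (using $k \geq 2$) whose size is controlled by the largest term, giving $\ll K y^{1-k}$ — the diagonal-type pairs $d_1 \asymp d_2 \asymp y$ dominate. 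For the error term $1$, I need to count pairs $(d_1, d_2)$ with $y \leq d_i < z$ for which the inner sum is nonempty, i.e. for which some multiple of $\ell^k$ lies in $(N-K, N]$; such $\ell$ must satisfy $\ell^k \leq N$, i.e. $\ell \leq N^{1/k}$, and for each admissible $\ell \leq N^{1/k}$ the number of pairs $(d_1,d_2)$ with $\mathrm{lcm}(d_1,d_2) = \ell$ is at most $d(\ell)^2 \ll \ell^{o(1)}$, or more crudely one bounds the number of $(d_1,d_2)$ with $\ell \mid d_1 d_2$-type divisibility by $\sum_{\ell \leq N^{1/k}} d_3(\ell) \ll N^{1/k} \log^2 N$; keeping the honest power of $\log z$ requires restricting $d_1, d_2 < z$ so that $\ell < z^2$ and the divisor sums only run up to there, yielding $\ll N^{1/k} \log^3 z$.

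The main obstacle is getting the error-term count sharp enough to land exactly $N^{1/k}\log^3 z$ rather than a wasteful $N^{1/k} \log^2 N$ or worse. The clean way is: the pairs contributing to the error term have $\ell \mid \mathrm{lcm}(d_1,d_2)$ with $\ell \le N^{1/k}$, and we bound the number of such pairs by $\sum_{\ell \le N^{1/k}} \#\{(d_1,d_2) : [d_1,d_2]=\ell,\ d_i<z\}$. Since $d_1 \mid \ell$ and $d_2 \mid \ell$, this is at most $\sum_{\ell \le N^{1/k}} d(\ell)^2$; but to extract powers of $\log z$ specifically one instead notes each such $\ell$ factors as $d_1 d_2 / (d_1;d_2)$ with all three factors $< z$, so parametrize by $(e_1, e_2, g)$ with $d_i = g e_i$ and count $\ll \sum_{g < z} \sum_{e_1 < z/g} \sum_{e_2 < z/g} 1$ restricted to those triples with $g e_1 e_2 \le N^{1/k}$ — the constraint $ge_1e_2 \le N^{1/k}$ caps the triple-sum at $\ll N^{1/k} \log^2 z$ after summing the innermost variable, and the remaining two logarithmically-sized ranges give the extra factors, for $\ll N^{1/k}\log^3 z$ total. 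Finally, combining the two contributions gives the stated bound, and the sharper $K=N$ statement follows because there the inner count is genuinely $\le N/\ell^k$ with no $+1$ loss. I would double-check the exponent bookkeeping (that the main term is $Ky^{1-k}$ and not $Ky^{2-k}$ or similar) by testing $y=1$, $K=N$, where $b$ reduces essentially to $\mu_k$ and the $L^2$-mean is $\asymp N$, consistent with $Ky^{1-k} = N$.
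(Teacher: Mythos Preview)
Your plan is correct and follows essentially the same route as the paper: expand the square, bound the inner count by $K/\mathrm{lcm}(d_1,d_2)^k + O(1)$ (with no $+O(1)$ when $K=N$), and treat the main term and the error term separately via the gcd/lcm structure. The only cosmetic difference is that the paper majorizes the ``$+1$'' by $N^{1/k}/\mathrm{lcm}(d_1,d_2)$ and then decouples using $(d;e)^j \le \sum_{t\mid (d;e)} t^j$ before swapping the order of summation, whereas you parametrize directly by $(g,e_1,e_2)$ with $g=(d_1;d_2)$; both executions yield the stated bounds.
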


\begin{proof} We follow the argument of \cite{BR}.
Insert the definition of $b_{y,z}(n)$ and open the square, to arrive at
\begin{align} \label{eq-NK2}
\sum_{N-K < n \leq N} |b_{y,z}(n)|^2 & = 
\sum_{y \leq d,e < z} \mu(d)\mu(e) \sum_{\substack{N-K < n \leq N \\d^k|n,\, e^k|n}} 1.
\end{align}
The inner sum is bounded by $K/\mbox{lcm}(d^k,e^k) + 1$ if $\mbox{lcm}(d,e) \leq N^{1/k}$
and is empty otherwise.
Therefore, when it is non-empty, the right hand side of \eqref{eq-NK2} bounded by
\begin{align*}
 \sum_{y \leq d,e < z} \Big(\frac{K}{\mbox{lcm}(d^k,e^k)} + \frac{N^{1/k}}{\mbox{lcm}(d,e)} \Big)
 \leq \sum_{y \leq d,e} \frac{K}{d^ke^k} \sum_{t|(d;e)} t^k  + \sum_{d,e < z} \frac{N^{1/k}}{de} \sum_{t|(d;e)} t.
\end{align*}
The last inequality used the identity $(d;e) \cdot \mbox{lcm}(d,e)  = de$ with the trivial observation, that a sum
of positive terms is always bigger then one of its summands. Changing the order of summation in the above sums,
we finally obtain the upper bound
\begin{align*}
 \sum_{N-K < n \leq N} |b_{y,z}(n)|^2 \leq &\ K \sum_{t} t^k 
\Big( \sum_{y \leq d,\, t|d} d^{-k} \Big)^2  + N^{1/k} \sum_{t < z} t \Big(\sum_{d < z,\, t|d} d^{-1} \Big)^2\\
\leq &\ K \sum_{t} t^{-k} \min(1,(t/y)^{k-1})^2  + N^{1/k} \sum_{t < z} t^{-1} \Big(\sum_{d < z} d^{-1} \Big)^2\\
\ll &\ K \sum_{t \leq y} t^{k-2}y^{2-2k}  + K \sum_{t > y} t^{-k} +  N^{1/k} \log^3 z \\
\ll &\ Ky^{1-k} + N^{1/k} \log^3 z.
\end{align*}
In the case where $K = N$ we get the bound $K/\mbox{lcm}(d^k,e^k)$ for the inner sum in \eqref{eq-NK2}
instead of $K/\mbox{lcm}(d^k,e^k) + 1$, which removes the term $N^{1/k} \log^3 z$ from the final estimate. 
\end{proof}

Let $H$ and $h$ be integers satisfying the inequalities $N^{1/k} \leq 2^H < 2N^{1/k}$ and 
$N^{1/(k+1)} < 2^h \leq 2N^{1/(k+1)}$. Define by means of \eqref{def-byz}
\begin{align*}
b_{i}(n) = b_{2^{i-1},2^i}(n) \qquad \mbox{  and  } \qquad
b^*(n) = b_{2^h,2^H}(n).
\end{align*}
Then from \eqref{mu-identity}, we obtain two decompositions
\begin{align} \label{mu-decomp}
\mu_k(n) = \sum_{i \leq H} b_i(n) = \sum_{i \leq h} b_i(n) + b^*(n)
\end{align}
and denote the corresponding exponential sums by
\begin{align*}
T_i(\alpha) = \sum_{n \leq N} b_i(n) e(\alpha n) \mbox{\quad and \quad} T^*(\alpha) = \sum_{n \leq N} b^*(n) e(\alpha n).
\end{align*}
The main idea of this work can be summarized as follows.
The functions $b_i$ with small $i$ are very regular periodic functions and the $T_i(\alpha)$ have a small $L^1$-norm, 
while the functions with bigger $i$ have low density and, therefore, have a small $L^2$-norm by Parseval's identity, 
as can already be seen in Lemma \ref{BR-Lemma1} above.
If we interpolate the corresponding exponential sums between the $L^1$- and $L^2$-estimates by H\"older's inequality,
we save enough to give the desired result up to some logarithmic factors. 
Some more effort is needed to remove those factors as well.

The next lemma summarizes the necessary $L^1$- and $L^2$-estimates for the exponential sums
together with a crude $L^p$-bound for $T_i$ which is useful in the removal of the logarithmic factors later on.

\begin{lemma} \label{L1L2-est}
For $N \geq 2$ and $p > 1$, we have
\begin{align*} 
& \int_{0}^1 |T_i(\alpha)| \,d\alpha \ll 2^i \log N, \qquad 
\int_{0}^1 |T_i(\alpha)|^p \,d\alpha \ll_p 2^i N^{p-1},\\
& \int_{0}^1 |T_i(\alpha)|^2 \,d\alpha \ll 2^{-i(k-1)} N , \quad 
\int_{0}^1 |T^*(\alpha)|^2 \,d\alpha \ll N^{2/(k+1)},
\end{align*}
where the implicit constants may depend on $k$.
\end{lemma}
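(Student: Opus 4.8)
The two $L^2$-bounds are the quickest: by Parseval's identity and the fact that $b_i$ and $b^*$ are real-valued, $\int_0^1 |T_i(\alpha)|^2\,d\alpha = \sum_{n\leq N} |b_i(n)|^2 = \sum_{0<n\leq N}|b_{2^{i-1},2^i}(n)|^2$ and likewise $\int_0^1 |T^*(\alpha)|^2\,d\alpha = \sum_{0<n\leq N}|b_{2^h,2^H}(n)|^2$. Apply Lemma~\ref{BR-Lemma1} with $K=N$ (so the $N^{1/k}\log^3 z$ term is absent): the first sum is $\ll N(2^{i-1})^{1-k}\ll_k 2^{-i(k-1)}N$, giving the third estimate. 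For $T^*$ we get $\ll N(2^h)^{1-k}$, and since $2^h>N^{1/(k+1)}$ this is $<N\cdot N^{(1-k)/(k+1)} = N^{2/(k+1)}$, which is the fourth estimate.

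For the remaining two bounds I would first record the exact shape of $T_i$. Swapping the order of summation in the definition via \eqref{def-byz} gives $T_i(\alpha) = \sum_{2^{i-1}\leq d < 2^i} \mu(d)\, G_d(\alpha)$, where $G_d(\alpha) := \sum_{m\leq N/d^k} e(\alpha d^k m)$ is a geometric sum (empty when $d^k>N$). Since $\alpha\mapsto d^k\alpha \bmod 1$ is measure-preserving and $d^k$-to-one on $[0,1)$, for any exponent $q\geq 1$ one has $\int_0^1 |G_d(\alpha)|^q\,d\alpha = \int_0^1 |\sum_{m\leq M} e(\beta m)|^q\,d\beta$ with $M = N/d^k\geq 1$. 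Using the standard bound $|\sum_{m\leq M} e(\beta m)|\ll \min(M,\|\beta\|^{-1})$ and splitting the $\beta$-integral at $1/M$, this one-dimensional integral is $\ll \log(2M)\ll \log N$ when $q=1$, and $\ll_q M^{q-1} = (N/d^k)^{q-1}$ when $q>1$.

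The $L^1$-bound then follows by the triangle inequality: $\int_0^1 |T_i(\alpha)|\,d\alpha \leq \sum_{2^{i-1}\leq d<2^i}\int_0^1 |G_d(\alpha)|\,d\alpha \ll 2^i \log N$, the number of relevant $d$ being $\ll 2^i$. For the $L^p$-bound with $p>1$, apply Hölder's inequality in the $d$-sum, which has at most $2^{i-1}$ terms, to get $|T_i(\alpha)|^p \leq 2^{i(p-1)}\sum_{2^{i-1}\leq d<2^i}|G_d(\alpha)|^p$; integrating and inserting the bound $\int_0^1|G_d|^p\,d\alpha\ll_p (N/d^k)^{p-1}$ yields $\int_0^1|T_i(\alpha)|^p\,d\alpha \ll_p 2^{i(p-1)}\cdot 2^i\cdot (N/2^{ik})^{p-1} = 2^i N^{p-1}\cdot 2^{i(p-1)(1-k)}$, and since $k\geq 2$ and $p>1$ the last factor is at most $1$. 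The main point to get right — and the one place where a naive argument fails — is the absence of a logarithmic loss in the $L^p$-bound: interpolating directly between the $L^1$- and $L^2$-estimates above would produce a spurious factor $(\log N)^{2-p}$, so one must instead exploit that each individual $\int_0^1|G_d|^p\,d\alpha$ is genuinely $\ll_p M^{p-1}$ with no logarithm, the cost $2^{i(p-1)}$ of Hölder being absorbed by the saving $2^{-ik(p-1)}$ coming from $k\geq 2$.
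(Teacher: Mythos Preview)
Your proof is correct and follows essentially the same route as the paper's: Parseval plus Lemma~\ref{BR-Lemma1} with $K=N$ for the two $L^2$-bounds, and for the $L^1$/$L^p$-bounds the same rearrangement into geometric progressions $G_d$, the same change of variables $\beta=d^k\alpha$, and the same H\"older step in the $d$-sum. Your exposition is a bit more explicit---you name $G_d$, and you spell out the residual factor $2^{i(p-1)(1-k)}\le 1$ that the paper leaves implicit in its final ``dependent on whether $p=1$ or $p>1$'' remark---but the mathematics is identical.
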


\begin{proof}
We prove the first two estimates at once. Start with
\begin{align*} 
\int_{0}^1 |T_i(\alpha)|^p \,d\alpha 
= & \int_{0}^1 \Big|\sum_{n \leq N} \sum_{\substack{2^{i-1} \leq d < 2^i\\ d^k|n}} \mu(d) e(\alpha n) \Big|^p \,d\alpha\\
\leq & \int_{0}^1  \Big(\sum_{2^{i-1} \leq d < 2^i} \Big| \sum_{m \leq N/d^k} e(\alpha d^km) \Big|\ \Big)^p \,d\alpha.
\end{align*}
For $p > 1$ and $p' = p/(p-1)$ we apply H\"older's inequality inside and get 
\begin{align*} 
\int_{0}^1 |T_i(\alpha)|^p \,d\alpha \leq &\ 
\int_{0}^1 \Big(\sum_{2^{i-1} \leq d < 2^i} 1^{p'}\Big)^{p/p'} 
\sum_{2^{i-1} \leq d < 2^i}  \Big|\sum_{m \leq N/d^k} e(\alpha d^km)\Big|^p \,d\alpha\\
\leq &\ 2^{i(p-1)} \sum_{2^{i-1} \leq d < 2^i}  \int_{0}^1 \Big|\sum_{m \leq N/d^k} e(\alpha d^km)\Big|^p \,d\alpha.
\end{align*}
For $p = 1$ we just change the order of integration and summation instead. 
Now we perform the change of variables $\beta = d^k\alpha$ and use 1-periodicity of the exponential $e(\beta m)$
to estimate the integral by
\begin{align*} 
\int_{0}^1 \Big|\sum_{m \leq N/d^k} e(\alpha d^km)\Big|^p \,d\alpha = & \
d^{-k} \int_{0}^{d^k} \Big|\sum_{m \leq N/d^k} e(\beta m)\Big|^p \,d\beta\\
= \int_0^1 \Big|\sum_{m \leq N/d^k} e(\beta m)\Big|^p \,d\beta
\ll &\ \int_0^1 \min(N/d^k,\|\beta\|^{-1})^p \,d\beta.
\end{align*}
Divide the range of integration into $\|\beta\| \leq d^k/N$ and the rest to get
\begin{align*} 
\int_{0}^1 |T_i(\alpha)|^p \,d\alpha \ll 2^{i(p-1)} \sum_{2^{i-1} \leq d < 2^i} \Bigg( 
d^kN^{-1} (Nd^{-k})^p + \int_{d^k/N}^{1/2} |\beta|^{-p} \,d\beta \Bigg).
\end{align*}
Dependent on whether $p = 1$ or $p > 1$ we get the corresponding estimate.\\

The $L^2$-estimates both follow easily from Lemma \ref{BR-Lemma1} and Parseval's identity
as follows. One has
\begin{align*} 
\int_{0}^1 |T_i(\alpha)|^2 \,d\alpha = \sum_{n \leq N} |b_i(n)|^2 \ll N 2^{i(1-k)},
\end{align*}
where we used the sharpened version in the case $K = N$. Similarly
\begin{align*} 
\int_{0}^1 |T^*(\alpha)|^2 \,d\alpha = \sum_{n \leq N} |b^*(n)|^2 \ll N 2^{h(1-k)} \ll N^{2/(k+1)},
\end{align*}
since $2^h \geq N^{1/(k+1)}$.
\end{proof}

Before we conclude this section and move on to the main part of the proof, we state 
a standard reformulation of H\"older's inequality, which the author found easier to work with
in some of the later passages.

\begin{lemma} \label{H-int} (H\"older-interpolation)
Assume that for $1 \leq q_1 \leq q_2$ we have
\begin{align*}
\int_0^1 |f(\alpha)|^{q_1} \,d\alpha \ll X^{a_1} \mbox{ and } \int_0^1 |f(\alpha)|^{q_2} \,d\alpha \ll X^{a_2}.
\end{align*}
Then for any $p = (1-\theta)q_1 + \theta q_2$ with $\theta \in [0,1]$ we have 
\begin{align*}
\int_0^1 |f(\alpha)|^p \,d\alpha \ll X^{(1-\theta)a_1 + \theta a_2}.
\end{align*}
\end{lemma}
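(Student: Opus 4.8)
The plan is to reduce the statement to a single application of the classical Hölder inequality, by writing the integrand $|f(\alpha)|^p$ as a product of a power of $|f(\alpha)|^{q_1}$ and a power of $|f(\alpha)|^{q_2}$. First I would dispose of the boundary cases: if $\theta = 0$ then $p = q_1$ and the conclusion is precisely the first hypothesis, and symmetrically if $\theta = 1$; so one may assume $\theta \in (0,1)$.

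For $\theta \in (0,1)$ I would set $r = 1/(1-\theta)$ and $r' = 1/\theta$, which satisfy $1/r + 1/r' = 1$ with $r, r' \in (1,\infty)$, so they are admissible conjugate exponents for Hölder. Since $p = (1-\theta)q_1 + \theta q_2$, one has the pointwise identity
\begin{align*}
|f(\alpha)|^p = \big(|f(\alpha)|^{q_1}\big)^{1/r}\,\big(|f(\alpha)|^{q_2}\big)^{1/r'},
\end{align*}
and applying Hölder on $[0,1]$ to the pair of functions $\alpha \mapsto |f(\alpha)|^{q_1}$ and $\alpha \mapsto |f(\alpha)|^{q_2}$ with these exponents gives
\begin{align*}
\int_0^1 |f(\alpha)|^p \,d\alpha \leq \Big(\int_0^1 |f(\alpha)|^{q_1}\,d\alpha\Big)^{1-\theta}\Big(\int_0^1 |f(\alpha)|^{q_2}\,d\alpha\Big)^{\theta}.
\end{align*}
Finally I would insert the two hypotheses; since the exponents $1-\theta$ and $\theta$ are nonnegative the inequalities are preserved under raising to these powers, and multiplying them yields $\int_0^1 |f(\alpha)|^p \,d\alpha \ll X^{(1-\theta)a_1 + \theta a_2}$.

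I do not expect any genuine obstacle here — the argument is pure bookkeeping. The only points to verify are that $r$ and $r'$ are indeed conjugate (immediate from their definitions) and that the resulting implied constant is the product of the two given implied constants raised to the powers $1-\theta$ and $\theta$, hence still a constant of the permitted shape. It is worth noting that the hypothesis $q_1 \leq q_2$ is not actually used in this argument; it is included only because that is the configuration in which the lemma is later applied, and because together with $q_1 \geq 1$ it guarantees $p \in [q_1, q_2] \subseteq [1,\infty)$.
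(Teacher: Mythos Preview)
Your argument is correct and is precisely the standard derivation the paper has in mind; the paper itself does not give a proof but merely introduces the lemma as ``a standard reformulation of H\"older's inequality''. Your remark that the ordering $q_1 \leq q_2$ is not needed for the inequality is also accurate.
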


\section{Upper Bounds for $p = 1 + 1/k$} \label{Crit}

Now we are well prepared to look at the critical point $p = 1 + 1/k$. This is conceptually the easiest
case and provides an outline for the general procedure, which in the other cases
is hidden by some technical details.

Using the first decomposition of $\mu_k$ in \eqref{mu-decomp} and H\"older's inequality, we can write 
(using $p' = k+1$ and $H \approx k^{-1}\log_2 N$)
\begin{align*}
I_k(p)  = & \int_0^1 \Big|\sum_{i \leq H} T_i(\alpha)\Big|^p \, d \alpha
 \ll_p \int_0^1 \Big(\sum_{i \leq H} 1^{p'} \Big)^{p/p'} \sum_{i \leq H} |T_i(\alpha)|^p \, d \alpha\\
 \ll & (\log N)^{1/k} \sum_{i \leq H} \int_0^1|T_i(\alpha)|^p \, d \alpha.
\end{align*}
By losing a logarithmic factor, we have separated the different pieces and can now interpolate each
$T_i$-integral between their $L^1$- and $L^2$-norms. We can write 
$p = 1+1/k = (1-\theta)\cdot 1 + \theta \cdot 2$ with $\theta = 1/k$ and, therefore,
by applying Lemma \ref{H-int} to interpolate the estimates from Lemma \ref{L1L2-est}, we have
\begin{align*}
\int_0^1|T_i(\alpha)|^p \, d \alpha \ll  (2^i \log N)^{1-1/k} (2^{-i(k-1)}N)^{1/k} = N^{1/k} (\log N)^{1-1/k}.
\end{align*}
By summing this over $i$ and collecting the logarithmic factor from above, we derive the upper bound
\begin{align*}
I_k(1+1/k) \ll N^{1/k} (\log N)^2,
\end{align*}
completing this part of the proof of Theorem \ref{main-Thm}.\\

For most applications of the circle method this kind of estimate is sufficient. H\"older's inequality 
gives us the upper bounds in Theorem \ref{main-Thm}
with an additional factor of size at most $(\log N)^2$. In some applications, however,
it is crucial to have a tight bound, which is only away by a multiplicative constant.
Sections \ref{Pbig} and \ref{Psmall} refine the above argument to obtain this improvement.

\section{Upper Bounds for $p > 1 + 1/k$} \label{Pbig}
The first observation to make is that we can restrict our attention
to the range $p \leq 2$ since the desired result follows for 
$p \geq 2$ from the trivial estimate $|S_k(\alpha)| \leq N$.

Our task is to avoid the logarithmic factors from the estimates in Section \ref{Crit}.
The first logarithm can be dealt with by introducing a polynomial weight before applying
H\"older's inequality. Modify the first calculation in Section \ref{Crit} as follows.
Since $p' \geq 2$, we obtain
\begin{align*}
I_k(p) = & \int_0^1 \Big|\sum_{i \leq H} i^{-1} i T_i(\alpha) \Big|^p \, d \alpha
\ll_p \int_0^1 \Big(\sum_{i \leq H} i^{-p'} \Big)^{p/p'} \sum_{i \leq H} i^p|T_i(\alpha)|^p \, d \alpha\\
 \ll & \sum_{i \leq H} i^p \int_0^1|T_i(\alpha)|^p \, d \alpha.
\end{align*}

To avoid the logarithmic factor in the $L^1$-mean, we interpolate instead between $L^{1+\delta}$ and $L^2$,
for some small $\delta > 0$.
Write $p = (1-\theta) \cdot (1 + \delta) + \theta \cdot 2$, which gives $\theta = \frac{p-(1+\delta)}{1-\delta}$.
By Lemma \ref{L1L2-est} and Lemma \ref{H-int}, we obtain
\begin{align*}
\int_0^1|T_i(\alpha)|^p \, d \alpha \ll_{\delta}  (2^i N^{\delta})^{1-\theta} (2^{-i(k-1)}N)^{\theta}
= N^{\delta (1-\theta) + \theta} \cdot 2^{i(1-\theta) - i(k-1)\theta} = N^{p-1} 2^{-i\phi}
\end{align*}
with $\phi = \frac{1}{1-\delta}((p-1)k-1 - \delta(k-1))$.
The expression $(p-1)k - 1$ is positive, due to the condition $p > 1 + 1/k$ and, therefore, 
when $\delta > 0$ is small enough (dependent on $p$ and $k$), we have $\phi > 0$.
We end up with
\begin{align*}
I_k(p) \ll_p N^{p-1} \sum_{i \leq H} i^p 2^{-i\phi} \ll_p N^{p-1}.
\end{align*}
Due to the exponential decay of $2^{-i\phi}$ our previously introduced factor of $i^p$ doesn't
cause any further problems and we are done with this case of the theorem as well.
We now move towards establishing the lower bounds and leave the technically
more demanding part $p < 1+ 1/k$ for a later section.

\section{Lower Bounds} \label{LB}

To deduce the lower bounds in the range $p < 1 + 1/k$ we use Theorem \ref{BR-Thm1}.
Observe that by Theorem \ref{BR-Thm1} the $L^1$-norm is bounded from below by $N^{1/(k+1)}$.
Applying H\"older's inequality to this with $p' = p/(p-1)$, we obtain 
\begin{align*}
N^{1/(k+1)} \ll \int_0^1 |S_k(\alpha)| \, d\alpha \ll 
\Big(\int_0^1 1^{p'} \, d\alpha \Big)^{1/p'} \Big(\int_0^1 |S_k(\alpha)|^{p} \, d\alpha \Big)^{1/p},
\end{align*}
which gives the required result for $p > 1$. For $p < 1$ we interpolate with Lemma \ref{H-int} between
$p$ and $1 + (k+1)^{-1}$. Write $1 = \theta p + (1-\theta) (1 + (k+1)^{-1})$
and use H\"older's inequality again to get
\begin{align*}
N^{1/(k+1)} \ll \int_0^1 |S_k(\alpha)| \, d\alpha \ll 
\Big(\int_0^1 |S_k(\alpha)|^{p} \, d\alpha \Big)^{1/\theta} 
\Big(\int_0^1 |S_k(\alpha)|^{(k+2)/(k+1)} \, d\alpha \Big)^{1-\theta}.
\end{align*}
The lower bound follows from the upper bound on 
the $L^{(k+2)/(k+1)}$-norm (which is proven in the next section).
For $p > 1 + 1/k$ we immediatly get the correct bound
\begin{align*}
N^{p-1} \ll \int_{|\alpha| \leq 1/(100N)} |S_k(\alpha)|^p \, d\alpha \ll  \int_0^1 |S_k(\alpha)|^p \, d\alpha
\end{align*}
from the $r=1$ case of the following lemma.

\begin{lemma} \label{Major}
Let $a,r \in \N$ with $(a;r^k) = 1$, $r$ squarefree and $r \leq N^{1/(5k)}$. 
Consider $\alpha = a/r^k + \beta$ with $|\beta| \leq 1/(100N)$. 
Then $|S_k(\alpha)| \geq N/(10r^k)$ for $N \geq N_0(k)$.
\end{lemma}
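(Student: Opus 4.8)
The plan is to evaluate $S_k(\alpha)$ directly at $\alpha = a/r^k + \beta$ by exploiting the multiplicative decomposition $\mu_k(n) = \sum_{d^k \mid n} \mu(d)$ and separating the behaviour of the exponential $e(an/r^k)$ according to the residue of $n$ modulo $r^k$. First I would write
\begin{align*}
S_k(\alpha) = \sum_{n \leq N} \mu_k(n) e(an/r^k) e(\beta n)
= \sum_{d} \mu(d) \sum_{\substack{m \leq N/d^k}} e(ad^k m/r^k) e(\beta d^k m),
\end{align*}
and then split the $d$-sum into those $d$ with $(d;r) = 1$ and those with a nontrivial common factor. The idea is that the main term comes from $d$ coprime to $r$ (in fact essentially from $d = 1$ is not quite right, since $\mu_k(n)$ does depend on all $d$; rather, the cleanest route is to pass back to $\mu_k$ and split $n$ by its residue class $b \bmod r^k$). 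So the more robust approach: write $n = b + r^k t$ with $0 \leq b < r^k$, so that $e(an/r^k) = e(ab/r^k)$ is constant on each class, giving
\begin{align*}
S_k(\alpha) = \sum_{b=0}^{r^k-1} e(ab/r^k) \sum_{\substack{t:\ b + r^k t \leq N}} \mu_k(b + r^k t) e(\beta(b+r^k t)).
\end{align*}

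The key step is to show that the inner sum over $t$, for each fixed $b$, has a main term proportional to the density of $k$-free numbers in that progression. Since $r$ is squarefree, the $k$-free numbers $n \equiv b \pmod{r^k}$ are nonempty in the relevant density sense precisely when $b$ itself is not forced to be non-$k$-free by the modulus; because $r^k \mid n$ would require $r \mid d$ for the obstructing $d$, and $r$ squarefree means $r^k \| r^k$, the condition is that $(b; r^k)$ is $k$-free, equivalently $r \nmid b$ at the appropriate level — and one checks $\sum_{b} e(ab/r^k) \cdot (\text{local density at } b)$ collapses, via $(a;r^k)=1$ and a Ramanujan-type sum evaluation, to a single main term of size $\asymp N/r^k$. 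Concretely I would use $\sum_{n \leq X,\ n \equiv b\,(r^k)} \mu_k(n) = \frac{X}{r^k}\prod_{p \nmid r}(1-p^{-k}) \cdot (\text{correction for } p \mid r) + O(\sqrt{X})$ type asymptotics, sum against $e(ab/r^k)$ and $e(\beta n)$ (the latter is nearly constant since $|\beta| N \leq 1/100$, costing only a factor like $\cos(2\pi/100) > 9/10$ in the real part), and arrive at $|S_k(\alpha)| \gg N/r^k \cdot \prod_{p}(1-p^{-k})$. The constant $\prod_p (1 - p^{-k}) = 1/\zeta(k) \geq 1/\zeta(2) > 3/5$, and combined with the $|\beta|$-loss and the $O(\sqrt N)$ error (which is $o(N/r^k)$ since $r \leq N^{1/(5k)}$ gives $r^k \leq N^{1/5} \ll \sqrt N$), one gets $|S_k(\alpha)| \geq N/(10 r^k)$ for $N$ large in terms of $k$.

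The main obstacle I expect is making the residue-class count for $k$-free numbers with the phase $e(\beta n)$ attached genuinely effective and uniform in $r$ up to $N^{1/(5k)}$: one must control the error term in $\sum_{n \equiv b\,(r^k)} \mu_k(n) e(\beta n)$ by partial summation from the error in the unweighted count, and verify that summing the individual error terms over all $r^k$ residues $b$ still yields something of size $o(N/r^k)$ — naively $r^k$ classes each with error $O(\sqrt{N/r^k})$ sums to $O(\sqrt{N r^k}) = O(N^{3/5})$, which is indeed $o(N^{4/5}) = o(N/r^k)$ only when $r^k$ is not too large, and the bound $r \leq N^{1/(5k)}$ is exactly calibrated for this. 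An alternative that sidesteps the residue bookkeeping is the $d$-sum approach: isolate $d=1$ contributing $\sum_{m \leq N} e(am/r^k)e(\beta m)$, which is $o(N)$ unless... no — this does not work directly either, so the residue-class method with careful uniform error control is the route I would commit to, and the error-term uniformity is the crux.
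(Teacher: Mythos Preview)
Your residue-class route is workable and, carried through with care, yields the same main term $\mu(r)\prod_{p\nmid r}(1-p^{-k})\,N/r^k$. But the paper takes exactly the $d$-sum route that you wrote down first and then discarded. The observation you missed is that in
\[
S_k(a/r^k)=\sum_{d\le N^{1/k}}\mu(d)\sum_{m\le N/d^k}e\!\Big(\frac{ad^k m}{r^k}\Big)
\]
the inner geometric sum equals $N/d^k+O(1)$ when $r\mid d$ (since $r$ squarefree gives $r^k\mid d^k$, so the exponential is identically $1$) and is $O(r^k)$ otherwise. Thus the main term is $\sum_{r\mid d}\mu(d)N/d^k=(N/r^k)\sum_f\mu(fr)/f^k$, with $|\sum_f\mu(fr)/f^k|\ge 1/3$ read off from $\mu(r)=\pm 1$. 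So the $d$-sum \emph{does} work --- one isolates $r\mid d$, not $d=1$ --- and this bypasses the density-in-progressions and Ramanujan-sum bookkeeping entirely. For $\beta$ the paper then applies a single global partial summation using the bound $\big|\sum_{n\le t}\mu_k(n)e(an/r^k)\big|\le 2t/r^k+O(r^kt^{1/k})$ just obtained.

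Two cautions about your version. First, the ``nearly constant'' treatment of $e(\beta n)$ is not safe as a one-line replacement: naively $|S_k(\alpha)-S_k(a/r^k)|\le\sum_{n\le N}|e(\beta n)-1|=O(N/100)$, which swamps $N/r^k$ once $r^k$ is moderately large; you really do need the partial summation you mention later, applied to the full partial sums (main plus error), not just to the error term. Second, your per-class remainder should be $O(X^{1/k})$ in general rather than $O(\sqrt{X/r^k})$ --- the latter is not obviously available without extra input --- though either bound, summed over $r^k$ classes, is still $o(N/r^k)$ under $r^k\le N^{1/5}$, so your conclusion survives.
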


\begin{proof}
First we deal with the case $\beta = 0$ and use summation by parts afterwards.
Insert the decomposition \eqref{mu-identity} of $\mu_k$ to get
\begin{align*}
S_k(a/r^k) = \sum_{n \leq N}  \sum_{d^k|n} \mu(d) e(an/r^k)
= & \sum_{d \leq N^{1/k}} \mu(d) \sum_{m \leq N/d^k}  e(amd^k/r^k).
\end{align*}
The inner sum is $N/d^k + O(r^k)$ if $r|d$ and $O(r^k)$ otherwise, giving
\begin{align*}
S_k(a/r^k) =  N\sum_{\substack{d \leq N^{1/k} \\ r|d}} \frac{\mu(d)}{d^k} + O(r^kN^{1/k})
= \frac{N}{r^k} \sum_{fr \leq N^{1/k}} \frac{\mu(fr)}{f^k} + O(r^kN^{1/k}).
\end{align*}
This gives $S_k(a/r^k) = C(r)N/r^k + O(r^kN^{1/k})$,
where $C(r) = \sum_{f = 1}^\infty \mu(fr)/f^k$ 
satisfies the bounds $1/3 \leq |C(r)| \leq 2$ since
$C(r) = \pm 1 + \sum_{f = 2}^\infty \mu(fr)/f^k$ for squarefree $r$.
The error term $O(r^kN^{1/k})$ is of smaller order due to the restriction $r \leq N^{1/(5k)}$, as long as $N$ is big enough.

Now we introduce a small pertubation of the form $e(\beta n)$ and use
summation by parts to obtain
\begin{align*}
S_k(\alpha)
& = e(\beta N) \sum_{n \leq N} \mu_k(n) e(a n/r^k) - 
\int_1^N 2\pi i \beta e(\beta t) \sum_{n \leq t} \mu_k(n) e(a n/r^k) \, dt.
\end{align*}
The first term is at least of size $N/(4r^k)$ by the previous calculation. 
The second term can be bounded by a similar procedure as above. We have
\begin{align*}
\Big| \sum_{n \leq t} \mu_k(n) e(a n/r^k)\Big| \leq 2t/r^k + r^kt^{1/k}.
\end{align*}
By integration, the contribution of the integral on the right hand side is bounded above by
$2\pi |\beta| N^2/r^k + |\beta|r^kN^{1 + 1/k} \leq N/(10r^k)$ 
by our choice of $\beta$ and $r$, if $N$ is big enough.
\end{proof}

Now we embark on the proof of the lower bound for the remaining case $p = 1 + 1/k$.
Set $R = N^{1/(5k)}$. For $r, s \leq R$ we would have the 
contradiction $(50N)^{-1} \geq |a/r^k - b/s^k| \geq (rs)^{-k} \geq N^{-1}$
if the two points $a/r^k \neq b/s^k$ would have intersecting neighbourhoods of radius $(100N)^{-1}$.
Therefore, they are disjoint and we can use Lemma \ref{Major} to estimate
\begin{align*}
& \int_{0}^1 |S_k(\alpha)|^{(k+1)/k} \,d\alpha \geq 
\sum_{r \leq R} \mu_2(r) \sum_{(a;r^k) = 1} \int_{|\alpha - a/r^k| \leq (100N)^{-1}} |S_k(\alpha)|^{(k+1)/k} \,d\alpha\\
 \geq & \ \sum_{r \leq R} \mu_2(r) \sum_{(a;r^k) = 1} (50N)^{-1} |N/(10r^k)|^{(k+1)/k}
\gg N^{1/k} \sum_{r \leq R} \mu_2(r) \varphi(r^k)/r^{k+1}\\
= & \ N^{1/k} \sum_{r \leq R} \mu_2(r) \varphi(r)/r^{2} \gg N^{1/k} \log N,
\end{align*}
where $\varphi$ is Euler's totient function and the last inequality
follows by summation by parts from the estimate
\begin{align*}
\sum_{r \leq R} \mu_2(r) \varphi(r)/r \gg R,
\end{align*}
which can be deduced as follows. We have
\begin{align*}
& \sum_{r \leq R} \mu_2(r) \varphi(r)/r = \sum_{r \leq R} \varphi(r)/r - \sum_{r \leq R} (1-\mu_2(r)) \varphi(r)/r\\
\geq & R/\zeta(2) + o(R) - \sum_{r \leq R} (1-\mu_2(r))  \geq (2/\zeta(2)-1)R + o(R) \gg R,
\end{align*}
where we used the identities \eqref{mu-identity} and $\varphi(r)/r = \sum_{d|r} \mu(d)/d$.

\section{Upper Bounds for $p < 1 + 1/k$} \label{Psmall}

In this section we essentially repeat an argument from \cite{BR} in a more general setting.
Since for $p \leq 1$ the bound follows by application of H\"older's inequality using the upper bound
for $p > 1$, we can restrict ourselves to the cases $1 < p < 1 + 1/k$.

Our main concern is the logarithmic factor introduced in the $L^1$-bound for $T_i$.
To remove it, we use a smoothed version of our exponential sums instead. The error introduced in this
way turns out to be small in the $L^2$-norm and can eliminated by a $L^0$-$L^2$-interpolation.

As in \cite{BR}, we consider the Fejer kernel
\begin{align*}
0 \leq F(\alpha) = \sum_{|n| \leq N} \Big( 1-\frac{|n|}{N}\Big)e(\alpha n) = 
\frac{\sin^2 (\pi N \alpha)}{N \sin^2 (\pi \alpha)} \ll \min\Big(N,\frac{1}{N\|\alpha\|^2} \Big).
\end{align*}
More generally for $1 \leq K$ and $1 \leq N$, we employ the kernel
\begin{equation} \label{eq-NK}
\begin{split}  
\sum_{|n| \leq N+K} \min \Big(1,\frac{N + K-|n|}{K}\Big)e(\alpha n)
 = & \frac{\sin(\pi (2N + K)\alpha) \sin (\pi K \alpha)}{K \sin^2 (\pi \alpha)}\\
 \ll & \min\Big(N+K,\frac{1}{\|\alpha\|},\frac{1}{K\|\alpha\|^2} \Big).
\end{split}
\end{equation}
We need this estimate also for non-integer values of $K$ and $N$.
This can be derived by taking $[N] + 1$ and $[K] + 1$ instead of $N$ and $K$. 
We obtain almost the same estimate but introduce an additional error of size $O(1)$. 
(The case $0 < K < 2$ is different but easier and can be dealt with separately.)

This can be used to obtain the final refinement with a congruence condition. 
For $1 \leq K$ and $1 \leq N$, $1 \leq d \leq N + K$ and $1 \leq M$ define
\begin{align*}
E_{N,K,M,d}(\alpha) := \sum_{\substack{|n| \leq N+K\\ n \equiv M (d)}} \min \Big(1,\frac{N + K-|n|}{K}\Big)e(\alpha n).
\end{align*}
Using the substitution $n = dm + M-M_0d$ this sum can be rewritten as
\begin{align*}
e((M-M_0d)\alpha) \sum_{|m+r| \leq (N+K)/d} \min 
\Big(1,\frac{(N + K)/d-|m+r|}{K/d}\Big) e(\alpha dm)
\end{align*}
for some $M_0 \geq 0$, such that $0 \leq M - M_0d < d$ and $r := M/d-M_0 < 1$.
We can remove $r$ by adding a summand of size $O(1)$. Now use estimate \eqref{eq-NK} for non-integer $N$ and $K$
to derive
\begin{align*}
E_{N,K,M,d}(\alpha) & \leq \Bigg| \sum_{|m| \leq (N+K)/d} \min 
\Big(1,\frac{(N + K)/d -|m|}{K/d}\Big) e(\alpha dm)\Bigg| + O(1)\\
& \ll \min\Big(\frac{N+K}{d},\frac{1}{\|d\alpha\|},\frac{d}{K\|d\alpha\|^2} \Big) + O(1).
\end{align*}

Now that we have the main new ingredient for this section, we 
decompose $T_i(\alpha)$ such that for $M = [N/2]$ and $K = 2^{h+ki-1}$ we have
\begin{align*}
& T_i(\alpha) =  \sum_{|n| \leq M + K} \min\Big(1,\frac{M+K-|n|}{K} \Big) b_i(M + n)e((M+n)\alpha)\\
& - \sum_{2M < n \leq 2M + K} \frac{2M+K-|n|}{K} b_i(n)e(\alpha n) 
- \sum_{n \leq K} \frac{K-|n|}{K} b_i(n)e(-\alpha n) + O(1),
\end{align*}
corresponding to formula (2.9) in \cite{BR}.
The contribution of the second and third term can be dealt with by Lemma \ref{BR-Lemma1} since
the sum is over a short interval of length $K$. We obtain by H\"older's inequality 
\begin{align*}
&\quad \int_0^1 \Big|\sum_{2M < n \leq 2M + K} \frac{2M+K-|n|}{K} b_i(n)e(\alpha n) \Big|^p \,d\alpha\\
 & \leq \Big( \int_0^1 \Big|\sum_{2M < n \leq 2M + K} \frac{2M+K-|n|}{K} b_i(n)e(\alpha n) \Big|^2 \,d\alpha \Big)^{p/2}\\
& \leq  \Big( \sum_{2M < n \leq 2M + K} |b_i(n)|^2 \Big)^{p/2} \ll (K 2^{i(1-k)} + N^{1/k} i^3)^{p/2}
 \ll_p (2^{h + i})^{p/2} + N^{p/(2k)} i^3.
\end{align*}

The third sum can be treated in exactly the same manner.
For the main contribution from the first sum we interpolate with Lemma \ref{H-int}.
Therefore, we first need an estimate for the $L^1$-norm. One has
\begin{align*}
& \int_0^1 \Big|\sum_{|n| \leq M + K} \min\Big(1,\frac{M+K-|n|}{K} \Big) b_i(M + n)e((M+n)\alpha)\Big| \,d\alpha \\
\leq  & \sum_{2^{i-1} \leq d < 2^i} \int_0^1 \Big|\sum_{\substack{|n| \leq M + K\\ d^k| M + n}} \min\Big(1,\frac{M+K-|n|}{K} \Big) 
e((M+n)\alpha)\Big| \,d\alpha\\
=  & \sum_{2^{i-1} \leq d < 2^i} \int_0^1 \Big|\sum_{\substack{|n| \leq M + K\\ n \equiv -M (d^k)}} 
\min\Big(1,\frac{M+K-|n|}{K} \Big) e((M+n)\alpha)\Big| \,d\alpha.
\end{align*}
Now we use the previously derived estimate for $E_{M,K,-M,d^k}(\alpha)$ and obtain the upper bound
for the $L^1$-norm
\begin{align*}
& \sum_{2^{i-1} \leq d < 2^i} \int_0^1 
\min\Big(\frac{M+K}{d^k},\frac{1}{\|d^k\alpha\|},\frac{d^k}{K\|d^k\alpha\|^2} \Big) + O(1) \,d\alpha\\
\ll & \sum_{2^{i-1} \leq d < 2^i} \Big(\Big( 1 + \log(M/K) \Big) + O(1) \Big)
\ll 2^i (1 + k(h-i)).
\end{align*}
The $L^2$-norm is easily treated by Parseval's identity and Lemma \ref{BR-Lemma1} by
\begin{align*}
& \int_0^1 \Big|\sum_{|n| \leq M + K} \min\Big(1,\frac{M+K-|n|}{K} \Big) b_i(M + n)e((M+n)\alpha)\Big|^2 \,d\alpha \\
\leq & \sum_{|n| \leq M + K} |b_i(M + n)|^2 \leq 2 \sum_{1 \leq n \leq N + K} |b_i(n)|^2 \ll  N2^{i(1-k)}.
\end{align*}

The final step is to interpolate between those two bounds. But before we do so, we want to point out that 
we can deal directly with the $T^*$-part of the exponential sum by the elementary estimate $|x + y|^p \ll_p |x|^p + |y|^p$
and Lemma \ref{L1L2-est}, which already gives the right order of magnitude after application of H\"older's
inequality.

Write $J_k(p)$ for the $L^p$-mean of the remaining sum $\sum_{i \leq h} T_i$.
We use a similar trick as in Section \ref{Pbig} to avoid the first logarithmic factor.
Since $p' \geq 2$, we obtain
\begin{align*}
J_k(p) & =  \int_0^1 \Big|\sum_{i \leq h} (1+ h-i)^{-1} (1 + h-i) T_i(\alpha) \Big|^p \, d \alpha \\
& \ll_p \int_0^1  
\sum_{i \leq h} (1 + h-i)^{p}|T_i(\alpha)|^p \, d \alpha
 \ll \sum_{i \leq h} (1 + h-i)^{p} \int_0^1|T_i(\alpha)|^p \, d \alpha.
\end{align*}

This is the point where we decompose $T_i$ into the smoothed part plus three error terms and apply
the elementary estimate $|x + y|^p \ll_p |x|^p + |y|^p$ to separate those.
We interpolate the smooth part between the previously obtained $L^1$- and $L^2$-bounds by using
$\theta = p-1$ and for the error terms we use the bounds obtained above.
For $N \geq N_0$, we have
\begin{align*}
J_k(p)  \ll_p & \sum_{i \leq h} (1+h-i)^{p} \Big( 2^i (1 + k(h-i)) \Big)^{2-p} \Big( N2^{i(1-k)} \Big)^{p-1} \\
 & +  \sum_{i \leq h} (1+h-i)^{p} \Big((2^{h+i})^{p/2} + N^{p/(2k)} i^3\Big)
\end{align*}
The polynomial factors with $1+h-i$ affect only the constants in the following estimates.
If $N \geq N_0$, the second sum above is $O(N^{p/(k+1)})$ and we obtain the bound
\begin{align*}
J_k(p) & \ll_p N^{p-1} \sum_{i \leq h} (1 + k(h-i))^2 2^{i(2-p + (p-1)(1-k))} + N^{p/(k+1)}\\
& \ll_p N^{p-1} 2^{h(2-p + (p-1)(1-k))} + N^{p/(k+1)}
\ll N^{p/(k+1)}
\end{align*}
since $2^h \approx N^{1/(k+1)}$. This concludes the last part of the proof.

\section*{Acknowledgments}

The author is grateful to his supervisor Trevor Wooley for constant support and motivation.
The PhD-work of the author is partially supported by the EPSRC.


\begin{thebibliography}{00}


\bibitem[1]{BGPVW}
J. Br\"udern, A. Granville, A. Perelli, R. C. Vaughan, T. D. Wooley,
\newblock {\em On the exponential sum over $k$-free numbers.}
\newblock{ R. Soc. Lond. Philos. Trans. Ser. A Math. Phys. Eng. Sci.  356  (1998),  no. 1738, 739--761.}

\bibitem[2]{BP}
J. Br\"udern, A. Perelli,
\newblock {\em Exponential sums of additive problems involving square-free numbers.}
\newblock{ Ann. Scuola Norm. Sup. Pisa Cl. Sci. (4)  28  (1999),  no. 4, 591--613.}

\bibitem[3]{BR}
A. Balog, I. Z. Ruzsa,
\newblock {\em On the exponential sum over $r$-free integers.}
\newblock{Acta Math. Hungar.  90  (2001),  no. 3, 219--230.}

\bibitem[4]{EL}
C. J. A. Evelyn - E. H. Linfoot,
\newblock {\em On a problem in the additive theory of numbers.}
\newblock{I: Math. Z. 30 (1929), 433--448; 
II: J. Reine Angew. Math. 164 (1931), 131--140; 
III: Math. Z. 34 (1932), 637--644; 
IV: Ann. of Math. 32 (1931), 261--270; 
V: Quart. J. Math. 3 (1932), 152--160; 
VI: Quart. J. Math. 4 (1933), 309--314.}

\bibitem[5]{EK}
E. Keil,
\newblock {\em Application of the circle method on multidimensional limit-periodic functions.}
\newblock{ Acta Arith.  150  (2011),  no. 2, 159--174.}

\bibitem[6]{M}
L. Mirsky,
\newblock {\em On a theorem in the additive theory of numbers due to Evelyn and Linfoot.}
\newblock{ Proc. Cambridge Philos. Soc.  44,  (1948). 305--312.}

\bibitem[7]{SP}
J.-C. Schlage-Puchta, 
\newblock {\em The exponential sum over squarefree integers.}
\newblock{ Acta Arith. 115 (2004), no. 3, 265--268.}

\bibitem[8]{T}
D. I. Tolev,
\newblock {\em On the exponential sum with square-free numbers.}
\newblock{ Bull. London Math. Soc. 37 (2005), no. 6, 827--834.}

\bibitem[9]{Vau}
R. C. Vaughan,
\newblock {\em The $L^1$ mean of exponential sums over primes.}
\newblock{ Bull. London Math. Soc.  20  (1988),  no. 2, 121--123.}


\end{thebibliography}
\end{document}